\newtheorem{theorem}{Theorem}
\newtheorem{proposition}{Proposition}
\theoremstyle{definition}
{}
\theoremstyle{remark} 
\newtheorem{remark}{Remark}
\newcommand{\field}[1]{\ensuremath{\mathbb{#1}}}
\newcommand{\CC}{\field{C}}
\newcommand{\PP}{\field{P}}
\newcommand{\ZZ}{\field{Z}}
\newcommand{\del}{\partial}
\newcommand{\sd}{\mathrm{d}}
\newcommand{\curly}[1]{\mathscr{#1}}
\newcommand{\cL}{\curly{L}}
\begin{document}
\title[Algebraic de Rham theorem]{Algebraic de Rham theorem and Baker-Akhiezer function}
\author[Igor Krichever]{Igor Krichever$^{\dagger}$}\thanks{$^{\dagger}$October 8, 1950 -- December 1, 2022}
\author{Leon Takhtajan}
\address{Department of Mathematics,
Stony Brook University, Stony Brook, NY 11794 USA; 
\newline
Euler International Mathematical Institute, Pesochnaya Nab. 10, Saint Petersburg 197022 Russia}
\begin{abstract} For the case of algebraic curves --- compact Riemann surfaces --- it is shown that de Rham cohomology group $H^{1}_{\mathrm{dR}}(X,\CC)$ of a genus $g$ Riemann surface $X$ has a natural structure of a symplectic vector space. Every choice of a non-special effective divisor $D$ of degree $g$ on $X$ defines a symplectic basis
of $H^{1}_{\mathrm{dR}}(X,\CC)$, consisting of holomorphic differentials and differentials of the second kind with poles on $D$. This result, the algebraic de Rham theorem, is used to describe the tangent space to Picard and Jacobian varieties of $X$ in terms of differentials of the second kind, and to define a natural vector fields on the Jacobian of $X$ that move 
points of the divisor $D$.  In terms of the Lax formalism on algebraic curves, these vector fields correspond to the Dubrovin equations in the theory of integrable systems, and the Baker-Akhierzer function is naturally obtained by the 
 integration along the integral curves. \end{abstract}
\subjclass[2020]{14F40, 14H40, 14H70}
\keywords{Riemann surfaces, divisors, line bundles, Riemann-Roch theorem, differentials of the second kind, algebraic de Rham theorem, Picard and Jacobian varieties, vector fields on the Jacobian variety, Lax representation, Dubrovin equation, Baker-Akhiezer function}
\maketitle
\section{Introduction}
Let $X$ be a smooth algebraic variety over $\CC$ with the classical topology of a complex manifold. According to Atiyah and Hodge \cite{AH}, closed meromorphic $p$-form $\varphi$ on $X$ is called \emph{differential of a second kind}, if it has zero residues on open subsets $U=X\setminus D$ for sufficiently large divisors $D$. A far-reaching generalization of Atiyah and Hodge results was given by Grothendieck \cite{Groth}. The quotient groups 
$$\frac{\{\text{$p$-forms of the second kind}\}}{\{\text{exact forms}\}}$$ 
have a natural interpretation in terms of a spectral sequence of certain complex of sheaves of meromorphic forms on $X$ \cite[Ch. 3, Sect. 5]{GH}. In particular, one gets the statement
$$H_{\mathrm{dR}}^{1}(X,\CC)\simeq \frac{\{\text{$1$-forms of the second kind}\}}{\{\text{exact forms}\}}.$$ 

When $X$ is a smooth algebraic curve of genus $g$, this isomorphism follows from the Riemann-Roch theorem. It turns out that in this case the space of differentials of the second kind carries a
natural skew-symmetric bilinear form, which is non-degenerate when taking a quotient by the subspace of exact forms. Using this bilinear form, in Theorem \ref{Chevalley} we give a more explicit formulation of the algebraic de Rham theorem. Specifically, we show that each non-special effective divisor $D$ of degree $g$ on $X$ defines
a symplectic basis of $H_{\mathrm{dR}}^{1}(X,\CC)$, which makes it possible to explicitly describe a complement to the Lagrangian subspace of holomorphic $1$-forms on $X$ as the subspace of differentials of the second kind with the poles in $D$.

In Section \ref{Picard} it is shown that every non-special effective divisor $D$ of degree $g$ defines an explicit isomorphism between the vector space $H^{0,1}(X,\CC)$ and the Lagrangian subspace of differentials of the second kind with
poles in $D$. This allows us to explicitly describe the tangent space to Picard variety (and its incarnations, Albanese and Jacobian varieties) in pure algebro-geometric terms.

It is quite remarkable that this formalism is connected with the theory of integrable systems. In the standard approach (see, e.g., \cite{FT}) the integrable system is described by the zero curvature equation
$$\frac{\del L}{\del t} -\frac{\del M}{\del x}+LM-ML=0,$$
where $L(x,t,\lambda)$ and $M(x,t,\lambda)$ are certain $r\times r$ matrix-valued rational functions of the spectral parameter $\lambda$, varying on $\CC\PP^{1}$, and also depend on extra arguments $x$ and $t$ (physical space and time variables). In papers \cite{K0,K1} by the first author (I.K.), the zero curvature formalism was
extended  to the case when spectral parameter varies on an algebraic curve. 

Namely, it was shown in \cite{K0} that a natural framework for this generalization is provided by the Hitchin system, expressed in terms of
Tyurin parameters for stable holomorphic vector bundles of rank $r$ and degree $rg$ on an algebraic curve. Correspondingly, the rational functions $L$ on $\CC\PP^{1}$ become special $r\times r$ meromorphic matrix $1$-forms $L(z)dz$
on an algebraic curve, and the set of such matrices is parameterized by the moduli space (or rather its Zariski open subset) of  stable holomorphic vector bundles of rank $r$ and degree $rg$ (see \cite{K0} for details). Similar explicit
description is given for  $r\times r$ meromorphic matrix-valued functions $M(z)$.

It turns out that the simplest case $r=1$ this formalism is still non-trivial and is naturally connected with Theorem \ref{Chevalley} and related discussion in Section \ref{algdR}. Namely, as shown in Section \ref{BA}, the meromorphic 1-forms $L(z)dz$ become differentials of the first kind on an algebraic curve $X$, while analogs of $M(z)$ are meromorphic functions $f$, defined using two non-special effective divisors $D$ and $D_{0}$ of degree $g$ on $X$. Varying divisors $D$ parameterize Jacobian of $X$ with the base point $D_{0}$, and the vector fields describing the motion of points
of $D$ are naturally expressed in terms of the meromorphic functions $f$. 

Remarkably, in the case when $X$ is a hyperelliptic curve, the equations for the integral curves of these vector fields, equations \eqref{dubrovin}, coincide with the \emph{Dubrovin equations},  arising in the theory of finite-gap integration of the Korteweg-de Vries equation \cite{Dubrovin}. Moreover, integrating meromorphic functions $f$ along these integral curves and using Dubrovin's equations, we naturally obtain the  \emph{Baker-Akhiezer function}, a fundamental object in the algebro-geometric approach to integrable systems, introduced by the first author (I.K.) in \cite{K}!
\subsection{Acknowledgments} The second author (L.T.) is grateful to the referee for constructive remarks and suggestions. 
\section{Differentials of the second kind}
Let $X$ be connected, compact Riemann surface of genus $g$ with the classical topology. Denote by $\mathcal{O}_{X}$ the sheaf of germs of holomorphic functions on $X$, by $\mathcal{M}_{X}$ --- the sheaf of germs of meromorphic functions on $X$, and by $\mathcal{M}$ --- the vector space of meromorphic functions on $X$. For every divisor $D$ on $X$ denote by $L=\mathcal{O}(D)$ the holomorphic line bundle associated with $D$, and by $H^{0}(X,L)$ ---  the vector space of holomorphic sections of $L$ over $X$. The following isomorphism
$$H^{0}(X,L)\simeq \mathcal{L}_{D}=\{f\in \mathcal{M} : (f)+D\geq 0\}$$
is very useful.

The Riemann-Roch theorem, together with the Kodaira-Serre duality, is the formula
$$h^{0}(L)-h^{0}(K_{X}-L)=\deg L +1-g,$$
where $h^{0}(L)=\dim_{\CC} H^{0}(X,L)$, $\deg L$ is the degree of $L$, and $K_{X}$ is the canonical class of $X$ --- the holomorphic cotangent bundle to $X$.

Let $\sd$ be the exterior derivative on $X$. The sheaf $\sd \mathcal{M}_{X}$ is a sheaf of germs of differentials of the second kind on $X$ and $\Omega^{(\mathrm{2nd})}=H^{0}(X,\sd \mathcal{M}_{X})$ is the infinite-dimensional vector space of the differentials of the second kind --- meromorphic $1$-forms on $X$ with zero residues.

The infinite-dimensional vector space $\Omega^{(\mathrm{2nd})}$ has a natural skew-symmetric bilinear form\footnote{There are also analogs of the skew-symmetric bilinear form $\omega_{X}$ and of algebraic de Rham theorem for meromorphic quadratic differentials, to appear in a paper by the second author (L.T.).
} 
$$\omega_{X}(\theta_{1},\theta_{2})=\sum_{P\in X}\mathrm{Res}_{P}(\sd^{-1}\theta_{1}\theta_{2}),\quad \theta_{1},\theta_{2}\in \Omega^{(\mathrm{2nd})},$$
where $\sd^{-1}\theta_{1}$ denotes any locally defined function $f$, such that $\sd f = \theta_{1}$, called the local antiderivative. The ambiguity in the choice of $f$ does not matter.

Indeed, it is clear that bilinear form $\omega_{X}$ is defined by a finite sum and the choice of an additive constant in the definition of a local antiderivative is irrelevant. The skew-symmetry of $\omega_{X}$ follows from the basic property
$$\mathrm{Res}_{P}(f_{1}\sd f_{2})=-\mathrm{Res}_{P}(f_{2}\sd f_{1}),$$
where meromorphic functions $f_{1}$ and $f_{2}$ are local antiderivatives of $\theta_{1}$ and $\theta_{2}$ in a neighborhood of $P\in X$.

\section{Algebraic de Rham theorem}\label{algdR} 
In an abstract form, algebraic de Rham theorem is the following statement
\begin{equation}\label{a-dR-1}
H^{1}_{\mathrm{dR}}(X,\CC)\simeq \Omega^{(\mathrm{2nd})}/\sd\mathcal{M},
\end{equation}
which is easily proved using a sheaf-theoretic de Rham isomorphism 
$$H^{1}_{\mathrm{dR}}(X,\CC)\simeq H^{1}(X,\underline{\CC}),$$
where $\underline{\CC}$ is the locally constant sheaf.
 
Indeed, consider the following short exact sequence of sheaves
$$\begin{CD}
0 @>>> \underline{\CC} @>i>> \mathcal{M}_{X} @>\sd>> \sd \mathcal{M}_{X} @>>> 0\\
\end{CD}$$
and the corresponding exact sequence in the cohomology 
$$\begin{CD}
H^{0}(X,\mathcal{M}_{X}) @>\sd>> H^{0}(X,\sd \mathcal{M}_{X}) @>\delta>> H^{1}(X,\underline{\CC})@>>>H^{1}(X, \mathcal{M}_{X}).
\end{CD}$$
It follows from the Riemann-Roch theorem that $H^{1}(X, \mathcal{O}(D))=0$ if $\deg D>2g-2$, which implies (see, e.g., \cite[Ch. 2, \S17.7]{For})
$$H^{1}(X,\mathcal{M}_{X})=\{0\},$$ 
and \eqref{a-dR-1} is proved.

Using bilinear form $\omega_{X}$, we can make isomorphism \eqref{a-dR-1} more concrete. Namely, 
we have the following statement result (see \cite[Ch. 6, \S8]{Ch}, \cite[Ch. III, \S\S5.3-5.4]{Eich} and \cite[Theorem 4]{LT}).
\begin{theorem} \label{Chevalley} The following statements hold.
\begin{enumerate}
\item[(i)] The restriction of the bilinear form $\omega_{X}$ to $\Omega^{(\mathrm{2nd})}/\sd\mathcal{M}$ is non-degenerate and
\begin{displaymath}
\dim_{\CC}\Omega^{(\mathrm{2nd})}/\sd\mathcal{M} =2g.
\end{displaymath}
\item[(ii)]
Each degree $g$ non-special effective divisor $D$ on $X$
defines the isomorphism
\begin{equation*}
\Omega^{(\mathrm{2nd})}/\sd\mathcal{M}\simeq\Omega^{(\mathrm{2nd})}\cap H^0(X, K_{X}+2D).
\end{equation*}
\item[(iii)] Let $D=P_1 + \cdots +P_g$ be a non-special divisor of degree $g$ with distinct points. For every choice of local coordinates in the neighborhoods of $P_i$, the vector space $\Omega^{(\mathrm{2nd})}\cap H^0(X, K_{X}+2D)$ has the
basis $\{\vartheta_i,\tau_i\}_{i=1}^g$, symplectic with respect to the bilinear from $\omega_{X}$,
\begin{equation*}
\omega_{X}(\vartheta_i,\vartheta_j)=\omega_{X}(\tau_i,\tau_j)=0,\;\;\omega_{X}(\vartheta_i,\tau_j)=
\delta_{ij},\quad i,j=1,\dots,g.
\end{equation*}
This basis consists of differentials of the first kind $\vartheta_i$ and differentials of
the second kind $\tau_i$, uniquely characterized by the conditions
\begin{equation*}
\vartheta_{i}=\left(\delta_{ij}+O(z-z_{j})\right)\sd z\;\;\text{and}\;\;\tau_{i}=\left(\frac{\delta_{ij}}{(z-z_{j})^{2}}+O(z-z_{j})\right)\sd z,
\end{equation*}
where $z_{j}=z(P_{j})$ for a local coordinate $z$ at $P_{j}$ and
$i,j=1,\dots,g$.
\end{enumerate}
\end{theorem}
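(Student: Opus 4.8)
The plan is to deduce all three parts from the Riemann-Roch theorem together with the two numerical consequences of non-speciality of $D$. Since $\deg D=g$ and, by hypothesis, $h^{0}(K_{X}-D)=0$, Riemann-Roch gives $h^{0}(D)=1$ (only constants have poles bounded by $D$), while $\deg(K_{X}+2D)=4g-2>2g-2$ forces $h^{1}(K_{X}+2D)=0$ and hence $h^{0}(X,K_{X}+2D)=3g-1$. These two facts are the real content; everything else is residue bookkeeping.

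I would start with part (iii), since it is the most concrete and the other two parts follow from it. Set $V=\Omega^{(\mathrm{2nd})}\cap H^{0}(X,K_{X}+2D)$. A form $\theta\in V$ has zero residues, so near each $P_{j}$ (with $t=z-z_{j}$) it expands as $\theta=\big(a_{-2,j}t^{-2}+a_{0,j}+O(t)\big)\sd z$ with the $t^{-1}$-term absent. I would introduce the coordinate map $\Phi\colon V\to\CC^{2g}$, $\theta\mapsto\big((a_{-2,j})_{j},(a_{0,j})_{j}\big)$. Its kernel consists of holomorphic forms vanishing at every $P_{j}$, i.e. elements of $H^{0}(X,K_{X}-D)=0$, so $\Phi$ is injective; meanwhile a residue count, in which the residue map $H^{0}(X,K_{X}+2D)\to\CC^{g}$ has image exactly the hyperplane $\sum_{j} r_{j}=0$ (every admissible residue vector is realized by a differential of the third kind), shows $\dim_{\CC}V=(3g-1)-(g-1)=2g$. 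Hence $\Phi$ is an isomorphism, and defining $\vartheta_{i}$ and $\tau_{i}$ as the $\Phi$-preimages of the standard basis vectors yields precisely the normalized differentials of the statement, with existence and uniqueness immediate.

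The symplectic relations are then pure residue calculus, and I expect this to be the one step requiring genuine care, since the outcome is sensitive to the exact normalization. For $\omega_{X}(\vartheta_{i},\vartheta_{j})$ the local antiderivative of a holomorphic form is holomorphic, so the integrand $\sd^{-1}\vartheta_{i}\,\vartheta_{j}$ is holomorphic and all residues vanish. For $\omega_{X}(\tau_{i},\tau_{j})$ one checks that the simultaneous vanishing of the $t^{-1}$- and $t^{0}$-coefficients of both $\tau_{i}$ and $\tau_{j}$ kills every local residue of $\sd^{-1}\tau_{i}\,\tau_{j}$. For the cross terms, $\sd^{-1}\vartheta_{i}=\delta_{ik}\,t+\text{const}+O(t^{2})$ near $P_{k}$ and $\tau_{j}$ has its only pole at $P_{j}$, so $\Res_{P_{j}}(\sd^{-1}\vartheta_{i}\,\tau_{j})=\delta_{ij}$ and $\omega_{X}(\vartheta_{i},\tau_{j})=\delta_{ij}$. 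This completes (iii) and, in particular, puts $\omega_{X}$ on $V$ in standard symplectic form.

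Finally I would derive (ii) and (i). The composition of the inclusion $V\hookrightarrow\Omega^{(\mathrm{2nd})}$ with the projection onto $\Omega^{(\mathrm{2nd})}/\sd\mathcal{M}$ is the natural candidate for the isomorphism of (ii); if $\theta\in V$ lies in $\sd\mathcal{M}$, then $\theta=\sd f$ with $f$ having at worst simple poles on $D$ and no logarithmic terms, so $f\in\mathcal{L}_{D}$, and $h^{0}(D)=1$ forces $f$ constant, hence $\theta=0$. The map is therefore injective, and it is an isomorphism because both sides have dimension $2g$ (the target by the abstract isomorphism \eqref{a-dR-1} together with $\dim_{\CC}H^{1}_{\mathrm{dR}}(X,\CC)=2g$). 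For (i), the residue theorem gives $\omega_{X}(\sd f,\theta)=\sum_{P}\Res_{P}(f\theta)=0$, so $\omega_{X}$ descends to the quotient; transporting it through the isomorphism of (ii) identifies it with $\omega_{X}$ on $V$, which is non-degenerate by the symplectic basis of (iii), and the dimension is again $2g$. The twin appeals to non-speciality, namely $H^{0}(K_{X}-D)=0$ for injectivity of $\Phi$ and $h^{0}(D)=1$ for injectivity of the quotient map, carry the argument; the main obstacle is simply organizing the residue computations of (iii) so that each normalization condition is invoked exactly where it is needed.
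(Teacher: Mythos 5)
Your proof is correct, and for part (iii) it coincides with the paper's: your coordinate map $\Phi$ is exactly the paper's map $L$ (recording the coefficients $\alpha_i(\theta),\beta_i(\theta)$ of the expansion at each $P_i$), injective because $\Ker\Phi=H^0(X,K_X-D)=0$; in fact you do slightly more than the paper here, since the paper defines $\vartheta_i,\tau_i$ as preimages of the standard basis vectors and leaves the residue computations for $\omega_X(\vartheta_i,\vartheta_j)=\omega_X(\tau_i,\tau_j)=0$, $\omega_X(\vartheta_i,\tau_j)=\delta_{ij}$ implicit, whereas you carry them out (correctly: the simultaneous vanishing of the $t^{-1}$ and $t^{0}$ coefficients of the $\tau$'s is precisely what kills all residues). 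Where you genuinely diverge is part (ii) and the dimension formula. The paper proves (ii) constructively: given $\theta\in\Omega^{(\mathrm{2nd})}$ with polar divisor $\sum_i n_iQ_i$, non-speciality gives $h^0(D+nQ_i)=n+1$ for $n\geq 0$, so one chooses $f_i\in\mathcal{L}_{D+(n_i-1)Q_i}$ and subtracts $\sd f_i$ to push all poles into $D$ with order at most $2$, exhibiting surjectivity of $\Omega^{(\mathrm{2nd})}\cap H^0(X,K_X+2D)\to\Omega^{(\mathrm{2nd})}/\sd\mathcal{M}$ by explicit reduction; the dimension is then computed inside that space as $h^0(K_X+2D)-h^0(K_X+D)+h^0(K_X)=2g$ (your count via the rank of the residue map, with image the hyperplane $\sum_j r_j=0$, is equivalent bookkeeping, though your parenthetical existence claim for third-kind differentials deserves its own one-line Riemann--Roch justification). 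You instead obtain surjectivity from injectivity ($h^0(D)=1$, the same fact the paper invokes parenthetically) together with equality of dimensions, importing $\dim_{\CC}\Omega^{(\mathrm{2nd})}/\sd\mathcal{M}=2g$ from the sheaf-theoretic isomorphism \eqref{a-dR-1} and the topological fact $\dim_{\CC}H^1_{\mathrm{dR}}(X,\CC)=2g$. This is legitimate and not circular, since \eqref{a-dR-1} is established in the paper before the theorem, but it trades away what the paper's route buys: the paper's argument is purely algebraic and constructive, producing an explicit representative with poles in $2D$ for any class, and---combined with \eqref{a-dR-1}---it re-derives the Betti number $2g$ from Riemann--Roch alone, whereas in your version the dimension statement in (i) ultimately rests on topology.
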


\begin{proof}
Let $(\theta)_{\infty}=\sum_{i=1}^{l}n_{i}Q_{i}$ be the polar divisor of
$\theta\in\Omega^{(\mathrm{2nd})}$, $n_{i}\geq 2$. Since $D$ is non-special, $h^{0}(K_{X}-D)=0$ and by Riemann-Roch formula we have $h^{0}(D+nQ_{i})=n+1$ for $n\geq 0$. Thus 
if $Q_{i}$ is not a point of $D$, there is $f_{i}\in\mathcal{L}_{D+(n_{i}-1)Q_{i}}$ such that
$$\mathrm{ord}_{Q_{i}}(\theta-df_{i})\geq 0.$$ 
If $Q_{i}$ is a point of $D$,  there is $f_{i}\in\mathcal{L}_{D+(n_{i}-1)Q_{i}} $ such that 
$$\mathrm{ord}_{Q_{i}}(\theta-df_{i})\geq -2.$$
(In this case, because $h^{0}(D)=1$, one can not adjust the principle part of $df_{i}$ at $Q_{i}$ to cancel posible second order pole of $\theta$).
Thus for $f=\sum_{i=1}^{l}f_{i}$ we have
$$(\theta-\sd f)\geq -2D,$$  which proves part (ii).

The dimension formula in part (i) easily follows from part (ii) since
\begin{gather*}
\dim_{\CC}\Omega^{(\mathrm{2nd})}\cap H^0(X, K_{X}+2D) \\=h^0(X, K_{X}+2D)-h^0(X, K_{X}+D)+h^0(X, K_{X}) \\=(3g-1)-(2g-1)+g=2g.
\end{gather*}

To prove part (iii), and the remaining statement in part (i), consider the linear map 
$$L: \Omega^{(\mathrm{2nd})}\cap H^0(X, K_{X}+2D)\to\CC^{2g},$$ defined as follows. For each
$\theta\in \Omega^{(\mathrm{2nd})}\cap H^0(X, K_{X}+2D)$, let $\alpha_{i}(\theta), \beta_{i}(\theta)\in\CC$ be such that near $P_{i}$
$$\frac{\theta}{\sd z}-\alpha_{i}(\theta)-\frac{\beta_{i}(\theta)}{(z-z_{i})^{2}}=O(z-z_{i}),$$
and put
$$L(\theta)=(\alpha_{1}(\theta), \beta_{1}(\theta),\dots,\alpha_{g}(\theta), \beta_{g}(\theta)).$$
Since $D$ is non-special, the map $L$ is injective and hence is an isomorphism, and we define $\vartheta_{i}$ and $\tau_{i}$ to have only non-zero components of $L$ to be, respectively, $\alpha_{i}=1$ and $\beta_{i}=1$.
\end{proof}
\begin{remark}
The choice of a non-special effective divisor $D$  on $X$ with $g$ distinct points $P_{i}$ and local coordinates is as an algebraic analogue of the choice of $a$-cycles on a Riemann surface. Correspondingly, differentials $\tau_{i}$ are analogues of differentials of the second kind with second-order poles, zero $a$-periods and normalized $b$-periods. The symplectic property of the basis $\{\vartheta_i,\tau_i\}_{i=1}^g$  is an analogue of the reciprocity laws for differentials of the first kind and the second kind (see\cite [Ch. 5, \S 1]{Iwasawa},  and \cite[Ch. VI, \S 3]{Kra}).
\end{remark}
\begin{remark} \label{dual-1}
Denote by $\Omega^{(\mathrm{2nd})}(2D)$ the  subspace in $\Omega^{(\mathrm{2nd})}$ spanned by $\tau_{i}$,
$$\Omega^{(\mathrm{2nd})}(2D)=\CC \tau_{1}\oplus\cdots\oplus\CC\tau_{g}.$$
Then $\Omega^{(\mathrm{2nd})}(2D)$ and $H^{0}(X,K_{X})$ are Lagrangian subspaces in $\Omega^{(\mathrm{2nd})}/\sd\mathcal{M}$, dual with respect to the pairing given by the symplectic form $\omega_{X}$.
\end{remark}

\section{Tangent space to Picard variety}\label{Picard}

We have a decomposition
\begin{equation}\label{dR-Ch}
H^{1}_{\mathrm{dR}}(X,\CC)=H^{1,0}(X,\CC)\oplus H^{0,1}(X,\CC)
\end{equation}
with the natural pairing 
$$H^{1,0}(X,\CC)\otimes H^{0,1}(X,\CC)\ni\alpha\otimes\beta\mapsto(\alpha,\beta)=\int_{X}\alpha\wedge\beta\in\CC.$$

The period map
$$H^{1,0}(X,\CC)\ni\vartheta\mapsto\int_{c}\vartheta\in\CC,\quad\text{where}\quad c\in H_{1}(X,\ZZ),$$
gives  canonical inclusion of the lattice $H_{1}(X,\ZZ)$ into  $H^{1,0}(X,\CC)^{\vee}$, the dual space to $H^{1,0}(X,\CC)$, and defines
the Albanese variety
$$\mathrm{Alb}(X)=H^{1,0}(X,\CC)^{\vee}/H_{1}(X,\ZZ).$$
Using the Dolbeault isomorphism and the exponential exact sequence of sheaves on $X$, we have for the Picard variety of line bundles of degree $0$
$$\mathrm{Pic}^{0}(X)=H^{0,1}(X,\CC)/H^{1}(X,\ZZ).$$

Thus holomorphic tangent space to $\mathrm{Pic}^{0}(X)$ can be identified with the vector space $H^{0,1}(X,\CC)$.

However, Theorem \ref{Chevalley} allows to describe the tangent space to Picard variety in purely algebro-geometric terms. Namely, we have the following simple result.

\begin{proposition}\label{iso-2D}
Each non-special effective divisor $D$ of degree $g$ defines an isomorphism
$$H^{0,1}(X,\CC)\simeq \Omega^{(\mathrm{2nd})}(2D).$$
\end{proposition}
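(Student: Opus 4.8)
The plan is to produce the isomorphism by composing three maps whose pieces are all already available in the excerpt. First I would recall that by Theorem~\ref{Chevalley}(i) and Remark~\ref{dual-1}, the quotient $\Omega^{(\mathrm{2nd})}/\sd\mathcal{M}$ is a $2g$-dimensional symplectic space that decomposes as the direct sum of two Lagrangian subspaces, $H^0(X,K_X)$ (the differentials of the first kind) and $\Omega^{(\mathrm{2nd})}(2D)=\CC\tau_1\oplus\cdots\oplus\CC\tau_g$. Under the abstract algebraic de~Rham isomorphism \eqref{a-dR-1}, this is exactly the Hodge-type splitting \eqref{dR-Ch} of $H^1_{\mathrm{dR}}(X,\CC)=H^{1,0}(X,\CC)\oplus H^{0,1}(X,\CC)$: the summand $H^0(X,K_X)\cong H^{1,0}(X,\CC)$ is the holomorphic part, and $\Omega^{(\mathrm{2nd})}(2D)$ provides an explicit complement. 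The whole point is that choosing $D$ rigidifies the otherwise non-canonical complement to the Lagrangian of holomorphic forms.

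The concrete recipe I would write down is as follows. Given a class in $H^{0,1}(X,\CC)$, represent it by a $\delb$-closed $(0,1)$-form $\mu$; its image in $H^1_{\mathrm{dR}}(X,\CC)$ under \eqref{dR-Ch} corresponds, via \eqref{a-dR-1}, to a well-defined class $[\theta]\in\Omega^{(\mathrm{2nd})}/\sd\mathcal{M}$. By Theorem~\ref{Chevalley}(ii), this class has a canonical representative lying in $\Omega^{(\mathrm{2nd})}\cap H^0(X,K_X+2D)$, and by part~(iii) together with Remark~\ref{dual-1} we may further project onto the $\Omega^{(\mathrm{2nd})}(2D)$-summand along $H^0(X,K_X)$. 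I would define the map $H^{0,1}(X,\CC)\to\Omega^{(\mathrm{2nd})}(2D)$ to be exactly this composite. Injectivity and surjectivity then follow from a dimension count: both spaces have dimension $g$ (for $H^{0,1}$ this is Hodge theory / Dolbeault, for $\Omega^{(\mathrm{2nd})}(2D)$ it is the definition as the span of the $g$ linearly independent $\tau_i$), so it suffices to check the map is well-defined and linear, and that the kernel is trivial.

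The kernel computation is where I expect the real content to sit. The claim is that a $(0,1)$-class maps to zero in $\Omega^{(\mathrm{2nd})}(2D)$ precisely when its de~Rham image lies in the Lagrangian $H^0(X,K_X)$, i.e.\ is represented by a holomorphic form; but a genuine $(0,1)$-class that is also represented by a $(1,0)$-form must be zero by the directness of the Hodge splitting \eqref{dR-Ch}. So the key step is to verify that the projection $H^{0,1}(X,\CC)\to H^1_{\mathrm{dR}}(X,\CC)\to\Omega^{(\mathrm{2nd})}(2D)$ is compatible with the two decompositions — that the holomorphic summand really does map to $H^0(X,K_X)$ and the anti-holomorphic summand to $\Omega^{(\mathrm{2nd})}(2D)$ — which is a statement about how the symplectic form $\omega_X$ interacts with the intersection pairing $\int_X\alpha\wedge\beta$. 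I would prove this by checking that $\omega_X$ restricted to $\Omega^{(\mathrm{2nd})}/\sd\mathcal{M}$ agrees, up to normalization, with the cup-product pairing on $H^1_{\mathrm{dR}}(X,\CC)$ under \eqref{a-dR-1} — a residue-versus-integral identity of the classical reciprocity type — so that the Lagrangian $H^0(X,K_X)$ for $\omega_X$ coincides with the isotropic subspace $H^{1,0}(X,\CC)$ for the cup product, forcing the complementary summands to match as well. Once that compatibility is in hand, the isomorphism is immediate and $D$-dependence enters only through the explicit choice of complement $\Omega^{(\mathrm{2nd})}(2D)$.
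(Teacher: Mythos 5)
Your main line of argument is sound and genuinely different from the paper's. You pass through the abstract isomorphism \eqref{a-dR-1}, take the representative of the de Rham class of $\beta\in H^{0,1}(X,\CC)$ lying in $\Omega^{(\mathrm{2nd})}\cap H^{0}(X,K_{X}+2D)$ (this representative is in fact \emph{unique}: two of them differ by $\sd f$ with $f$ having at most simple poles on $D$, and $h^{0}(D)=1$ forces $f$ to be constant), and then project onto $\Omega^{(\mathrm{2nd})}(2D)$ along $H^{0}(X,K_{X})$. Injectivity follows exactly as in your first kernel paragraph: if the projection vanishes, the representative is holomorphic, so the de Rham class of $\beta$ lies in $H^{1,0}\cap H^{0,1}=\{0\}$; with $\dim H^{0,1}=\dim\Omega^{(\mathrm{2nd})}(2D)=g$ this gives the isomorphism. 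The only compatibility this needs is that a holomorphic $1$-form, viewed as a differential of the second kind, goes under \eqref{a-dR-1} to its own de Rham class, which is immediate from the \v{C}ech construction of both isomorphisms. By contrast, the paper's proof never invokes \eqref{a-dR-1} at all: it defines $\psi(\beta)=\sum_{i=1}^{g}(\vartheta_{i},\beta)\,\tau_{i}$ directly and observes that $(\vartheta,\beta)=\omega_{X}(\vartheta,\psi(\beta))$ for all $\vartheta\in H^{0}(X,K_{X})$, so injectivity is just the non-degeneracy of the pairing between $H^{1,0}$ and $H^{0,1}$. Your route buys a conceptual picture (the divisor $D$ rigidifies a complement to the Lagrangian of holomorphic forms inside de Rham cohomology); the paper's buys brevity and an explicit formula.

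However, the statement you single out as ``the key step'' is false, and you should delete it rather than try to prove it. It is \emph{not} true that the anti-holomorphic summand $H^{0,1}(X,\CC)$ maps onto $\Omega^{(\mathrm{2nd})}(2D)$ under these identifications: the images of $H^{0,1}(X,\CC)$ and of $\Omega^{(\mathrm{2nd})}(2D)$ in $\Omega^{(\mathrm{2nd})}/\sd\mathcal{M}\simeq H^{1}_{\mathrm{dR}}(X,\CC)$ are in general two \emph{different} Lagrangian complements of the common Lagrangian $H^{1,0}\simeq H^{0}(X,K_{X})$, and the pairing compatibility you propose (that $\omega_{X}$ matches the cup product, a true classical reciprocity) cannot rescue the claim, since two Lagrangian complements of the same Lagrangian need not coincide -- the compatibility only reproves that both subspaces are Lagrangian. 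Concretely, on an elliptic curve $X=\CC/(\ZZ\omega_{1}+\ZZ\omega_{2})$ with $D=P_{1}$ one has $\tau_{1}=\wp(z-z_{1})\,\sd z=-\sd\zeta(z-z_{1})$, whose de Rham periods are $(-\eta_{1},-\eta_{2})$, while a purely $(0,1)$ class has period vector proportional to $(\bar\omega_{1},\bar\omega_{2})$; this proportionality fails for a generic lattice (it holds, amusingly, for the square lattice), so $[\tau_{1}]$ generally has a non-zero $(1,0)$-component. Fortunately nothing in your proof requires this claim: well-definedness, the kernel computation via directness of the Hodge splitting, and the dimension count already suffice, and the residue-versus-integral reciprocity is needed only if you additionally want to show your composite map agrees (up to normalization) with the paper's $\psi$.
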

\begin{proof}
It follows from Theorem \ref{Chevalley} part (iii), that the mapping
$$H^{0,1}(X,\CC)\ni\beta\mapsto\psi(\beta)=\sum_{i=1}^{g}(\vartheta_{i},\beta)\tau_{i}\in\Omega^{(\mathrm{2nd})}(2D)$$
satisfies
$$(\vartheta,\beta)=\omega_{X}(\vartheta,\psi(\beta))$$
for any $\vartheta\in H^{0}(X,K_{X})$, and is an isomorphism.
\end{proof}

Identifying $H^{1,0}(X,\CC)^{\vee}$ with $\Omega^{(\mathrm{2nd})}(2D)$, we get an inclusion of $H_{1}(X,\ZZ)$ into $ \Omega^{(\mathrm{2nd})}(2D)$, defined as follows.
Let $\theta_{c}$ be the $(0,1)$-component of the Poincar\'{e} dual of a cycle $c\in H_{1}(X,\ZZ)$, so
$$\int_{c}\vartheta=\int_{X}\vartheta\wedge\theta_{c}=(\vartheta,\theta_{c})\quad\text{for all}\quad\vartheta\in H^{1,0}(X,\CC).$$
Then
\begin{equation}\label{incl-2nd}
H_{1}(X,\ZZ)\ni c\mapsto\tau_{c}=\psi(\theta_{c})=\sum_{i=1}^{g}\int_{c}\vartheta_{i}\cdot \tau_{i}\in \Omega^{(\mathrm{2nd})}(2D),
\end{equation}
so
\begin{equation}\label{Alb-2nd}
\mathrm{Alb}(X)=\Omega^{(\mathrm{2nd})}(2D)/H_{1}(X,\ZZ).
\end{equation}
Thus a choice of a non-special effective divisor $D$ of degree $g$  allows to identify holomorphic tangent spaces to $\mathrm{Alb}(X)\simeq \mathrm{Pic}^{0}(X)\simeq\mathrm{Jac}(X)$ with the vector space $\Omega^{(\mathrm{2nd})}(2D)$ of the differentials of the second kind with poles in $D$.
Correspondingly, the holomorphic cotangent space is naturally identified with the vector space of $H^{1,0}(X,\CC)$ of differentials of the first kind, and the pairing with $\Omega^{(\mathrm{2nd})}(2D)$  is given by the symplectic form $\omega_{X}$. 
\section{The Baker-Akhiezer function} \label{BA}

Fix a non-special effective divisor $D_{0}=Q_{1} + \cdots + Q_{g}$ of degree $g$ and let $\{\vartheta_i\}_{i=1}^{g}$ be the basis of $H^{0}(X, K_{X})$ from Theorem \ref{Chevalley}, specialized to the divisor $D_{0}$. 
Consider the Abel-Jacobi map
$$X^{(g)}\ni D\to \mu^{(g)}(D)\in\mathrm{Jac}(X),$$
where $\mu^{(g)}$ is the Abel sum: for varying $D=P_{1} + \cdots + P_{g}$
\begin{equation}\label{abel-sum}
\mu^{(g)}(D)=\left(\sum_{i=1}^{g}\int_{Q_{i}}^{P_{i}}\vartheta_{1},\dots, \sum_{i=1}^{g}\int_{Q_{i}}^{P_{i}}\vartheta_{g}\right).
\end{equation}
 
 Choose local coordinates at $P_{i}$ and put $z_{i}=z(P_{i})$. It follows from \eqref{abel-sum} that $1$-forms $dz_{i}$ on $\mathrm{Jac}(X)$  at  the base point $\mu^{g}(D_{0})$ correspond to differentials  $\vartheta_{i}$, and the vector fields $\dfrac{\partial}{\partial z_{i}}$ to the differentials
of the second kind $\tau_{i}$ from Theorem \ref{Chevalley}. If divisor $D$ is also non-special, it follows from the group law on the Jacobian and Theorem \ref{Chevalley} that $\sd z_{i}$ and $\dfrac{\partial}{\partial z_{i}}$ at a point $\mu^{(g)}(D)$ are given by the symplectic basis of $\Omega^{(\mathrm{2nd})}\cap H^0(X, K_{X}+2D)$ from Theorem \ref{Chevalley}.

Equivalently, these vector fields on $\mathrm{Jac}(X)$ can be described using the formalism of Lax equations on algebraic curves, developed by the first author in \cite{K0, K1}. 
The main ingredients in  \cite{K0, K1} are stable vector bundles of rank $r$ and degree $rg$ and Lax operators, certain meromorphic $r\times r$ matrix-valued $1$-forms $L(z)dz$ on a Riemann surface and $r\times r$ meromorphic matrix-valued functions $M(z)$.  

Specialization to the Jacobian corresponds to the case $r=1$ and simplifies construction in \cite{K0, K1} dramatically. 
Namely, meromorphic $1$-forms $L(z)\sd z$  become differentials of the first kind $\vartheta\in H^{0}(X,K_{X})$, while  analogs of meromorphic functions $M(z)$  are defined 
as follows.

Consider the vector space 
$$\mathcal{L}_{D+D_{0}}=\{f\in\mathcal{M} : (f)+D+D_{0}\geq 0\}.$$ 
It follows from the Riemann-Roch theorem that $\dim_{\CC}\mathcal{L}_{D+D_{0}}=g+1$. Thus for any fixed choice of principal parts of $f$ at the points of $D_{0}$, not all of them are equal to zero, 
there is a unique, up to an inessential additive constant,  $f\in\mathcal{L}_{D+D_{0}}$ satisfying  
\begin{equation}\label{poles-P}
f(z)=\frac{\alpha_{i}}{z-z_{i}} +O(1),\quad z_{i}=z(P_{i})
\end{equation}
at all points of the divisor $D=P_{1}+\cdots+P_{g}$. Functions $f$, parametrized by their fixed principal parts at $D_{0}$, play the role of meromorphic functions $M(z)$ in case $r=1$; coefficients $\alpha_{i}$ depend on the principal parts at $D_{0}$.

We have a unique decomposition
$$\sd f=\tau-\tau_{0},$$
where  $\tau\in\Omega^{(\mathrm{2nd})}(2D)$ (see Remark \ref{dual-1}) and $(\tau_{0})+2D_{0}\geq 0$. By the residue theorem,
$$-\sum_{i=1}^{g}\mathrm{Res}_{P_{i}}(f\vartheta)=\omega_{X}(\vartheta,\tau)=\omega_{X}(\vartheta,\tau_{0}),\quad\vartheta\in H^{0}(X,K_{X}),$$
so the pairing (2.22) in \cite{K0}, given by the Krichever-Phong form, coincides with the pairing given by the symplectic form $\omega_{X}$.

A choice of a symplectic basis of $\Omega^{(\mathrm{2nd})}\cap H^0(X, K_{X}+2D)$ establishes a correspondence 
$$f\mapsto \cL_{f}= - \sum_{i=1}^{g}\alpha_{i}\dfrac{\partial}{\partial z_{i}}$$
between rational functions $f\in\mathcal{L}_{D+D_{0}}$ and vector fields on $\mathrm{Jac}(X)$. Along an integral curve $D(t)=P_{1}(t)+\cdots +P_{g}(t)$ of $\cL_{f}$, where $D(0)=D$, we have
\begin{equation}\label{dubrovin}
\dot{z}_{i}(t)=-\alpha_{i}(t),\quad i=1,\dots, g,
\end{equation}
where the dot stands for the $t$-derivative. 
In case when $X$ is a hyperelliptic curve, equations \eqref{dubrovin} are classical \emph{Dubrovin equations},  arising in the theory of finite-gap integration for the Korteweg-de Vries equation \cite{Dubrovin}, written in terms of the Abel transform. 
Using Dubrovin equations, we see that along the integral curve equations \eqref{poles-P} take the form
\begin{equation}\label{poles-P-t}
f_{t}(z)=-\frac{\dot{z}_{i}(t)}{z-z_{i}(t)} +O(1),\quad i=1,\dots,g.
\end{equation}
Thus integrating and introducing
$$\Psi(z)=\exp\left\{\int_{0}^{T}f_{t}(z)dt\right\},$$
we see from \eqref{poles-P-t} that $\Psi$ is a meromorphic function on $X\setminus D_{0}$ having simple poles only at $D$, simple zeros only at $D(T)$, and essential singularities at the points of $D_{0}$.
The function $\Psi$ is nothing but the celebrated \emph{Baker-Akhiezer function}, introduced by the first author in \cite{K}!

We leave it to the interested reader to describe by explicit formulas this connection between algebraic de Rham theorem and the integrable systems. 


\begin{thebibliography}{99}
\bibitem{AH} M.F. Atiyah and W.V.D. Hodge, {\em Integrals of the second kind on an algebraic variety}, Annals of Math., \textbf{62} (1955), 56--91.
\bibitem{Ch} C.~Chevalley, {\em Introduction to the theory of algebraic functions of one variable}, American Mathematical Society, Providence, R.I., 1963.
\bibitem{Dubrovin} B.A.~Dubrovin, \emph{Periodic problems for the Korteweg-de Vries equation in the class of finite band potentials}, Funkt. Anal. Priloz., {\bf 9}:3 (1975), 41--51; English transl. Funct. Anal. Appl., {\bf 9}:3 (1975), 215--223.
\bibitem{Eich} M.~Eichler, {\em Introduction to the theory of algebraic numbers and functions}, Academic Press, New York, 1966.
\bibitem{FT} L.D. Faddeev and L.A. Takhtajan, \emph{Hamiltonian methods in the theory of solitons}, Springer ``Classics in Mathematics'' 2007, xiv + 592 pp, reprint of 1987 original.
\bibitem{GH} Ph. Griffiths and J. Harris, {\em Principles of algebraic geometry}, Wiley-Interscience, New York, 1978.
\bibitem{For} O.~Forster, \emph{Lectures on Riemann surfaces}, Springer-Verlag, New York, 1981.
\bibitem{Groth} A. Grothendieck, \emph{On the de Rham cohomology of algebraic varieties}, Publications math\'{e}matiques de I.H.\'{E}.S., \textbf{29} (1966), 95--103.
\bibitem{Iwasawa}K. Iwasawa, {\em Algebraic functions}, Transl. Math. Monogr., vol. 118, Amer. Math. Soc., Providence, RI 1993.
\bibitem{Kra} I. Kra, {\em Automorphic forms and Kleinian groups}, Benjamin, Reading, MA 1972.
\bibitem{K}  I.M. Krichever, \emph{Integration of nonlinear equations by the methods of algebraic geometry}, Funkt. Anal. Priloz., \emph{11}:1 (1977), 15--31; English transl. Funct. Anal. Appl., {\bf 11}:1 (1977), 12--26.
\bibitem{K0} I.M. Krichever, \emph{Vector Bundles and Lax Equations on Algebraic Curves}, Commun. Math. Phys. \textbf{229} (2002), 229--269.
\bibitem{K1}  I. M. Krichever, \emph{Isomonodromy equations on algebraic curves, canonical transformations and Whitham equations}, Mosc. Math. J., \textbf{2}:4 (2002), 717--752. 
\bibitem{LT} L.A. Takhtajan, \emph{Quantum field theories on algebraic curves. I. Additive bosons} Izv. Ross. Akad. Nauk Ser. Mat. \textbf{77} (2013), no. 2, 165--196; English transl.  Izv. Math. \textbf{77} (2013), no. 2, 378--406.
\end{thebibliography}
\end{document}